\newtheorem{theorem}{Theorem}[section]
\newtheorem{lemma}[theorem]{Lemma}
\newtheorem{corollary}[theorem]{Corollary}
\newtheorem{problem}[theorem]{Problem}
\theoremstyle{definition}
\theoremstyle{remark}
\newtheorem{remark}[theorem]{Remark}
\numberwithin{equation}{section}
\begin{document}

\title[On subspaces of finite-dimensional groups]%
{Closed locally path-connected subspaces of finite-dimensional
groups are locally compact}

\author{Taras Banakh}
\address{Instytut Matematyki, Akademia \'Swi\c
etokrzyska, Kielce, Poland; and Department of Mechanics and
Mathematics, Ivan Franko Lviv National University, Lviv, Ukraine}

\email{tbanakh@yahoo.com}

\author{Lyubomyr Zdomskyy}
\address{Department of Mechanics and Mathematics,
Ivan Franko Lviv National University, Universytetska1, Lviv,
79000, Ukraine; and Department of Mathematics, Weizmann Institute
of Science, Rehovot 76100, Israel}
\email{lzdomsky@rambler.ru}

\subjclass[2000]{54H11, 54F45, 54F15.}

\keywords{Locally continuum-connected, finite-dimensional,
(closed) embedding, topological group.}

\begin{abstract} We prove that each closed locally continuum-
connected subspace of a finite dimensional topological group is
locally compact. This allows us to construct many 1-dimensional
metrizable separable spaces that are not homeomorphic to closed
subsets of finite-dimensional topological groups, which answers in
negative a question of D.Shakhmatov. Another corollary is a
characterization of Lie groups as finite-dimensional locally
continuum-connected topological groups. For locally path connected
topological groups this characterization was proved by Gleason and
Palais in 1957.
\end{abstract}

\maketitle

\section{Introduction}

It follows from the classical Menger-N\"obeling-Pontryagin Theorem
that each separable metrizable space $X$ of dimension $n=\dim
X<\infty$ admits a topolo\-gical embedding $e:X\to G$ into a
metrizable separable group $G$ of dimension $\dim(G)=2n+1$ (for
such a group $G$ we can take the $(2n+1)$-dimensional Euclidean
space $\mathbb{R}^{2n+1}$).  In \cite{Shah} (see also
\cite[Question 7]{DS}) Dmitri Shakhmatov asked if we can
additionally require of $e:X\to G$ to be a {\bf closed} embedding?
In this paper we shall give a strongly negative answer to this
 question constructing
simple 1-dimensional spaces that are not homeomorphic to closed
subspaces of finite-dimensional topological groups.

First we state a Key Lemma treating locally continuum-connected
subspaces of finite-dimensional topological groups. By a {\em
continuum} we understand a connected compact Hausdorff space. We
shall say that two points $x,y$ of a topological space $X$ are
connected by a {\em subcontinuum} $K\subset X$ if $x,y\in K$.

 Following \cite{Cur} we define a topological space $X$ to be {\em
locally con\-tinuum-connected} at a point $x\in X$ if for every
neighborhood $U\subset X$ of $x$ there is another neighborhood
$V\subset U$ of $x$ such that each point $y\in V$ can be connected
with $x$ by a subconti\-nuum $K\subset U$. A space $X$ is {\em
locally continuum-connected} if it is locally continuum-connected
at each point. It is clear that $X$ is locally continuum-connected
at $x\in X$ if $X$ is locally path-connected at $x$.

The ``locally path-connected'' version of the following lemma was
proved by  A.Gleason \cite{Gl} and D.Montgomery \cite{Mon} and
then was used by A.Gleason and R.Palais \cite{GP} to prove that
locally path-connected finite-dimensional topological groups are
Lie groups. We shall say that a topological group $G$ is {\em
compactly finite-dimensional} if
$$\mbox{co-dim}(G)=\sup\{\dim(K):\mbox{$K$ is a compact subspace of
$G$}\}$$ is finite. Observe that a subgroup of a compactly finite-
dimensional group also is compactly finite-dimensional.

\begin{lemma}[Key Lemma] If a subspace $X$ of a compactly
finite-dimensional topological group $G$ is locally
continuum-connected at a point $x\in X$, then some neighborhood
$U\subset X$ of $x$ has compact closure in $G$.
\end{lemma}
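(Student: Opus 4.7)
The plan is a proof by contradiction, aiming to build a compact continuum in $G$ of dimension strictly greater than $n:=\mbox{co-dim}(G)$. By left translating, one may replace $x$ with the identity $e$: the map $g\mapsto x^{-1}g$ is a homeomorphism of $G$ sending $X$ to a subspace that is locally continuum-connected at $e$. So assume $x=e$ and suppose for contradiction that no neighborhood of $e$ in $X$ has compact closure in $G$.

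The first step is to fix a decreasing base $(W_k)_{k\geq 0}$ of symmetric open neighborhoods of $e$ in $G$ with $W_{k+1}\cdot W_{k+1}\subseteq W_k$, and then use the local continuum-connectedness iteratively to produce a nested sequence $U_0\supseteq U_1\supseteq\cdots$ of neighborhoods of $e$ in $X$, with $U_k\subseteq W_k\cap X$, such that each point of $U_{k+1}$ is joined to $e$ by a subcontinuum lying in $U_k$. The contradictory hypothesis ensures that each $\overline{U_k}$ (closure in $G$) is non-compact.

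Next, the plan is to construct inductively a strictly increasing chain of compact continua $\{e\}=C_0\subsetneq C_1\subsetneq\cdots\subsetneq C_{n+1}$ in $G$, each containing $e$, with $\dim C_k\geq k$. At stage $k$, given $C_{k-1}$ compact, the failure of $\overline{U_k}$ to be compact supplies a point $y_k\in U_k$ that one can choose to lie in a coset $gW_m$ missing $C_{k-1}$ for $m$ sufficiently large; applying local continuum-connectedness then furnishes a subcontinuum $L_k\subseteq U_{k-1}$ joining $e$ to $y_k$, and one sets $C_k:=C_{k-1}\cup L_k$ (a continuum, since both pieces contain $e$).

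The central obstacle — and really the technical heart of the argument — is ensuring the strict dimensional growth $\dim C_k>\dim C_{k-1}$. Mere set-theoretic enlargement does not raise dimension, so one needs to select $y_k$ (and hence $L_k$) so that $L_k$ contributes a genuinely new dimension. The natural tool is a dimension-theoretic separator / partition argument (of Urysohn--Menger type), combined with the translation-invariant uniform structure of $G$, arranging that the intersection $L_k\cap C_{k-1}$ is forced into a subset of dimension at most $\dim C_{k-1}-1$ and then applying the standard sum-theorem estimates. Once the chain is built, $C_{n+1}$ is a compact subset of $G$ of dimension at least $n+1$, contradicting $\mbox{co-dim}(G)=n$; this contradiction shows that some neighborhood of $e$ (equivalently, of $x$) in $X$ must have compact closure in $G$.
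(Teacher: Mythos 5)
Your overall strategy (contradiction, reduction to $x=e$, exploiting the non-compactness of closures of neighborhoods to keep finding ``new'' points, and aiming at a compact set of dimension exceeding $\mbox{co-dim}(G)$) matches the paper's in spirit, but the mechanism you propose for raising dimension cannot work. You set $C_k:=C_{k-1}\cup L_k$ where $L_k$ is a subcontinuum of $X$, and you hope to force $\dim C_k>\dim C_{k-1}$ by controlling the intersection $L_k\cap C_{k-1}$. The finite closed sum theorem goes the wrong way for this: for closed subsets $A,B$ of a normal space one has $\dim(A\cup B)\le\max\{\dim A,\dim B\}$, with no hypothesis whatsoever on $A\cap B$. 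Hence $\dim C_{n+1}\le\max_k\dim L_k$, and each $L_k$ is a subcontinuum of $X$ --- if $X$ is, say, one-dimensional (the hedgehog, the very example the lemma is aimed at), every $C_k$ has dimension at most $1$ no matter how cleverly you choose the points $y_k$. No separator or partition argument can rescue a construction based on unions; the dimension of a finite union of continua simply does not grow.

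The idea you are missing is to use the group multiplication to take \emph{products} rather than unions. The paper constructs continua $C_0,\dots,C_n\subset X$ through $e$ with marked points $a_i\in C_i\setminus\{e\}$ and considers the map $f_i:\prod_{j\le i}C_j\to G$, $(x_0,\dots,x_i)\mapsto x_0\cdots x_i$. The inductive choices guarantee that for each $k\le i$ the images of the opposite faces $A_{k,i}=\operatorname{pr}_{k,i}^{-1}(e)$ and $B_{k,i}=\operatorname{pr}_{k,i}^{-1}(a_k)$ under $f_i$ are disjoint: for $k<i$ this is arranged by picking $C_i$ inside a small neighborhood $U_i$ with $f_{i-1}(A_{k,i-1})\cdot U_i$ disjoint from $f_{i-1}(B_{k,i-1})\cdot U_i$ (a compactness argument applied to the previously built images), and for $k=i$ by using the non-compactness hypothesis to pick $a_i\in V_i\setminus K$ where $K=\{x^{-1}y:x,y\in f_{i-1}(K_{i-1})\}$, so that $f_{i-1}(K_{i-1})\cdot a_i$ misses $f_{i-1}(K_{i-1})$. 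The Holszty\'nski--Lifanov theorem on essential families in products of continua (the paper's Dimension Lemma) then yields $\dim f_i(K_i)\ge i+1$, and taking $i=n$ contradicts $\mbox{co-dim}(G)=n$. This is where the group structure genuinely enters; your outline uses $G$ only for translations and a uniformity, which is not enough.
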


Because of its technical character, we postpone the proof of this
lemma till the end of the paper. Now we state several corollaries
of this lemma. We recall that a regular topological space $X$ is
{\em cosmic} if it is a continuous image of a separable metrizable
space. This is equivalent to the countability of the network
weight of $X$.

\begin{theorem}\label{t1} Each (closed) locally
continuum-connected cosmic subspace $X$ of a compactly
finite-dimensional topological group $G$ is metrizable (and
locally compact).
\end{theorem}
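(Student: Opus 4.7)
The plan is to apply the Key Lemma at every point of $X$ in order to produce a ``compact hull'' in $G$ around each point, and then combine this with the cosmic (countable network) hypothesis and the Urysohn metrization theorem to pass from local to global metrizability. For each $x\in X$ the Key Lemma furnishes an open neighborhood $U_x\subset X$ whose closure $K_x:=\overline{U_x}^G$ in $G$ is compact. If $X$ is closed in $G$, then $K_x\subset\overline X^G=X$, so $K_x$ is a compact neighborhood of $x$ in $X$ and $X$ is locally compact; this handles the parenthetical assertion.

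The key intermediate step is to prove that each $K_x$ is metrizable. Since $K_x$ is compact Hausdorff and contains the dense subspace $U_x$, and $U_x$ inherits a countable network from $X$, I would aim to invoke Arkhangel'skii's theorem---a compact Hausdorff space is metrizable if and only if it has a countable network---and transfer a countable network from $U_x$ to $K_x$. The transfer should exploit both the local continuum-connectedness of $U_x$ (which supplies many compact connected, hence cosmic and therefore metrizable, subcontinua inside $U_x$) and the fact that $K_x$ is a compact subspace of the compactly finite-dimensional group $G$, forcing $\dim K_x\le\mbox{co-dim}(G)<\infty$.

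Once $K_x$ is known to be metrizable, the subspace $U_x\subset K_x$ is separable metrizable, hence second countable. Thus $X$ is locally second countable. Since $X$ is cosmic it is Lindel\"of; a Lindel\"of space that is locally second countable is itself second countable (extract a countable subcover from the open cover $\{U_x\}_{x\in X}$ and concatenate the countable bases of the selected $U_x$). Finally, as a subspace of the Tychonoff group $G$, $X$ is regular, so the Urysohn metrization theorem yields that $X$ is metrizable.

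The principal obstacle is the metrizability of $K_x$: a dense cosmic subspace does not in general force a compact Hausdorff space to be cosmic (witness $\mathbb{N}\subset\beta\mathbb{N}$), so one must genuinely use the finite-dimensionality of $G$ together with the rich supply of subcontinua furnished by the local continuum-connectedness hypothesis. The remaining reductions (local to global second countability, second countable + regular $\Rightarrow$ metrizable, closed + compact-in-$G$ closure $\Rightarrow$ locally compact) are standard general topology.
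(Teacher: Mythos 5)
Your proposal is structurally sound except at the one step you yourself flag as the ``principal obstacle'': the metrizability of $K_x=\overline{U_x}^G$. There you do not actually give an argument --- you say you ``would aim to'' transfer a countable network from $U_x$ to $K_x$ by somehow exploiting subcontinua and the bound $\dim K_x\le\mbox{co-dim}(G)$, and you correctly observe (via $\mathbb{N}\subset\beta\mathbb{N}$) that density of a cosmic subspace alone does not suffice. But neither finite-dimensionality nor an abundance of metrizable subcontinua inside $U_x$ obviously yields a countable network for the compact space $K_x$: the closure adds points of $G\setminus X$ about which your hypotheses on $X$ say nothing directly, and a compact space can be finite-dimensional and covered by metrizable subspaces without being metrizable. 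So as written this is a genuine gap, not a routine verification.

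The paper closes this gap with a one-line reduction that your proposal misses: replace $G$ by the subgroup algebraically generated by $X$. That subgroup is the countable union $\bigcup_n (X\cup X^{-1}\cup\{e\})^n$, each piece being a continuous image of a finite power of the cosmic space $X\oplus X^{-1}\oplus\{e\}$; hence the whole group is cosmic. Every compact subset of a cosmic (regular, countable network) space has a countable network and is therefore metrizable by Arkhangel'skii's theorem --- exactly the statement you wanted for $K_x$, obtained with no appeal to dimension or to subcontinua. With that step supplied, the remainder of your argument goes through: your endgame (locally second countable $+$ Lindel\"of $\Rightarrow$ second countable, then Urysohn) is a correct variant of the paper's (locally metrizable $+$ paracompact $\Rightarrow$ metrizable, via Smirnov), and your treatment of the closed case is the same as the paper's. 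I recommend you insert the ``pass to the group hull of $X$'' reduction as the first line of your proof and delete the speculative paragraph about transferring networks.
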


\begin{proof} Replacing $G$ by the group hull of $X$, if
necessary, we may assume that $X$ algebraically generates $G$.
Then $G$ is a cosmic space and hence all compact subsets of $G$
are metrizable. By Key Lemma, each point $x\in X$ has a
neighborhood $U\subset X$ with compact (and thus metrizable)
closure $\overline{U}$ in $G$. If $X$ is closed in $G$, then
$U=\overline{U}$ is compact and hence $X$ is locally compact.
Being locally metrizable and paracompact (because of Lindel\"of),
the space $X$ is metrizable.
\end{proof}

\begin{corollary}\label{reklama} For a finite-dimensional locally
continuum-connec\-ted cosmic space $X$ the following conditions
are equivalent:
\begin{enumerate}
\item $X$ admits a closed embedding into a compactly
finite-dimen\-sional topological group;
\item $X$ admits a closed embedding into $\mathbb{R}^{2n+1}$ where
$n=\dim(X)$;
\item $X$ is metrizable and locally compact.
\end{enumerate}
\end{corollary}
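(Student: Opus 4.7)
The plan is to prove the cyclic chain of implications $(2) \Rightarrow (1) \Rightarrow (3) \Rightarrow (2)$. The implication $(2) \Rightarrow (1)$ is immediate because $\mathbb{R}^{2n+1}$ is itself a finite-dimensional (hence trivially compactly finite-dimensional) topological group. The implication $(1) \Rightarrow (3)$ is a direct invocation of Theorem~\ref{t1}: identifying $X$ with its closed image in a compactly finite-dimensional group $G$, all the hypotheses of that theorem are satisfied, so $X$ is metrizable and locally compact.

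The substantive direction is $(3) \Rightarrow (2)$, and here I would reduce to a closed-embedding variant of the Menger--N\"obeling--Pontryagin theorem. First, since $X$ is metrizable and cosmic (and network weight coincides with weight for metrizable spaces), $X$ is in fact separable metrizable; together with local compactness this forces $X$ to be $\sigma$-compact. I would then pass to the one-point compactification $X^+ = X \cup \{\infty\}$, which under these assumptions is compact metrizable, and in which a standard dimension-theoretic argument gives $\dim X^+ = \dim X = n$. Applying the classical Menger--N\"obeling--Pontryagin embedding theorem to $X^+$ produces a topological embedding $f : X^+ \hookrightarrow \mathbb{R}^{2n+1}$. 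Composing with the inclusion $\mathbb{R}^{2n+1} \subset S^{2n+1}$ and then using stereographic projection, which identifies $S^{2n+1} \setminus \{f(\infty)\}$ with $\mathbb{R}^{2n+1}$, yields an embedding $X \hookrightarrow \mathbb{R}^{2n+1}$. The image is closed: $f(X^+)$ is compact, hence closed in $S^{2n+1}$, so $f(X) = f(X^+) \setminus \{f(\infty)\}$ is closed in the open subset $S^{2n+1} \setminus \{f(\infty)\} \cong \mathbb{R}^{2n+1}$.

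The main technical obstacle is verifying that $\dim X^+ = n$, i.e., that adjoining the point at infinity does not raise the covering dimension. This is a classical but nontrivial fact about locally compact $\sigma$-compact metrizable spaces and can be extracted from standard references in dimension theory; alternatively, one can short-circuit this step entirely by invoking directly the well-known sharpening of Menger--N\"obeling--Pontryagin asserting that every locally compact separable metrizable space of dimension $n$ admits a \emph{closed} topological embedding into $\mathbb{R}^{2n+1}$.
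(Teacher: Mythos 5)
Your proof is correct and follows what is evidently the intended argument: the paper states this corollary without proof, with $(1)\Rightarrow(3)$ coming straight from Theorem~\ref{t1} and $(3)\Rightarrow(2)$ from the standard closed-embedding strengthening of the Menger--N\"obeling--Pontryagin theorem (which your one-point-compactification argument correctly reconstructs). No gaps; the remaining implications are trivial exactly as you say.
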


This corollary supplies us with many examples of
finite-dimen\-sional second countable spaces admitting no closed
embedding into (compactly) finite-dimensional topological groups.

Probably the simplest one is the hedgehog
$$H_\omega=\bigcup_{n\in\omega}[0,1]\cdot\vec e_n\subset l_2$$
where $(\vec e_n)$ is the standard orthonormal basis in the
Hilbert space $l_2$.

\begin{corollary}\label{c1} No compactly finite-dimensional
topological group contains a closed subspace homeomorphic to the
hedgehog $H_\omega$.
\end{corollary}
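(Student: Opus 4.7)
The plan is to deduce Corollary \ref{c1} immediately from Corollary \ref{reklama}: I would verify that the hedgehog $H_\omega$ satisfies the hypotheses required to apply that corollary, so that a closed embedding into a compactly finite-dimensional topological group would force $H_\omega$ to be locally compact; an explicit failure of local compactness at the origin then yields the contradiction.

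First, I would check the structural hypotheses of Corollary \ref{reklama}. As a separable metrizable subspace of $l_2$, the hedgehog $H_\omega$ is cosmic. Its covering dimension equals $1$, since each spine $[0,1]\cdot\vec e_n$ is a closed $1$-dimensional subspace and the countable sum theorem for dimension applies to closed subsets in a metrizable space. It is locally path-connected (hence locally continuum-connected): at any point $t\vec e_n$ with $t>0$, small neighborhoods are open arcs in the corresponding spine, while at the origin every basic neighborhood $B(0,\varepsilon)\cap H_\omega=\bigcup_{n\in\omega}[0,\varepsilon)\cdot\vec e_n$ is itself path-connected through $0$.

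Next, I would record the one feature of $H_\omega$ that rules out local compactness at the origin. Given any neighborhood $U\subset H_\omega$ of $0$, pick $\varepsilon>0$ with $B(0,\varepsilon)\cap H_\omega\subset U$; then the points $\tfrac{\varepsilon}{2}\vec e_n$ lie in $U$ and are pairwise at $l_2$-distance $\varepsilon/\sqrt2$, forming a closed, infinite, discrete subset of $U$. Hence $\overline{U}^{\,H_\omega}$ cannot be compact, so $H_\omega$ is not locally compact at $0$.

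Finally, I would assemble the contradiction: if $H_\omega$ were homeomorphic to a closed subspace of some compactly finite-dimensional topological group $G$, then condition (1) of Corollary \ref{reklama} would hold for $X=H_\omega$, and the corollary would force condition (3), namely that $H_\omega$ be metrizable and locally compact. This directly contradicts the preceding paragraph. There is no substantive obstacle in this argument: the real content sits in the Key Lemma and in Corollary \ref{reklama}, and the only task here is to observe that the hedgehog is a cosmic, finite-dimensional, locally path-connected space which nevertheless fails local compactness at a single point.
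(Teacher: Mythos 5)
Your proof is correct and takes essentially the same route as the paper, which presents Corollary~\ref{c1} precisely as an instance of Corollary~\ref{reklama} applied to the hedgehog. Your verifications that $H_\omega$ is cosmic, one-dimensional, locally path-connected, and fails local compactness at the origin (via the uniformly discrete set $\{\tfrac{\varepsilon}{2}\vec e_n\}_{n\in\omega}$) supply exactly the details the paper leaves implicit.
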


Besides the metrizable topology the hedgehog carries also a
natural non-metriza\-ble topology, namely the strongest topology
inducing the original Euclidean topology on each needle
$[0,1]\cdot \vec e_n$, $n\in\omega$. The hedgehog endowed with
this non-metrizable topology will be denoted by $V_\omega$ and
will be referred to as the {\em Fr\'echet-Urysohn hedgehog} (by
analogy with the Fr\'echet-Urysohn fan).

The metrizable hedgehog $H_\omega$ admits no closed embedding into
a compactly finite-dimensional group, but can be embedded into the
2-dimensional group $\mathbb{R}^2$. In contrast with this, by
Theorem~\ref{t1}, nothing similar cannot be done for the
Fr\'echet- Urysohn hedgehog $V_\omega$.

\begin{corollary}\label{c2} No compactly finite-dimensional
topological group contains a subspace homeomorphic to the
Fr\'echet-Urysohn hedgehog $V_\omega$.
\end{corollary}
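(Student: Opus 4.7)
The plan is to apply Theorem~\ref{t1} to $V_\omega$ and derive a contradiction from the non-metrizability of $V_\omega$. The proof reduces to three easy verifications of properties of $V_\omega$.

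First, $V_\omega$ is cosmic: it is Hausdorff and regular (at $0$, given a closed set $F\not\ni 0$, pick $\epsilon_n>0$ with $[0,\epsilon_n)\vec e_n\cap F=\emptyset$; then $W=\bigcup_n[0,\epsilon_n/2)\vec e_n$ is an open neighborhood of $0$ whose closure $\bigcup_n[0,\epsilon_n/2]\vec e_n$ still misses $F$), and it is a continuous image of the separable metrizable space $\bigsqcup_{n\in\omega}[0,1]$ (via the map collapsing the zeros).

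Second, $V_\omega$ is locally path-connected, hence locally continuum-connected. This is trivial away from $0$; and at $0$, given an open $U\ni 0$, one picks $\epsilon_n>0$ with $[0,\epsilon_n)\vec e_n\subset U$ and notes that the open neighborhood $W=\bigcup_n[0,\epsilon_n/2)\vec e_n\subset U$ has the property that every point $t\vec e_n\in W$ is joined to $0$ by the path $[0,t]\vec e_n\subset U$.

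Third, $V_\omega$ is not first countable at $0$, hence not metrizable: given any sequence $(U_k)_{k\in\omega}$ of neighborhoods of $0$, pick $\epsilon_k>0$ with $[0,\epsilon_k)\vec e_k\subset U_k$; then $W=\bigcup_k[0,\epsilon_k/2)\vec e_k$ is an open neighborhood of $0$ into which no $U_k$ fits, since $(\epsilon_k/2)\vec e_k\in U_k\setminus W$. Combining these facts: if $V_\omega$ embedded as a subspace of a compactly finite-dimensional topological group $G$, Theorem~\ref{t1} would force $V_\omega$ to be metrizable, contradicting the third point. The only potential obstacle---essentially trivial but easy to overlook---is to notice that the metrizability half of Theorem~\ref{t1} holds without any closedness hypothesis on $X$: inspection of its proof shows that closedness is used only to upgrade the local metrizability produced by the Key Lemma to local compactness. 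This is precisely the version needed here, and is what distinguishes this corollary from Corollary~\ref{c1}.
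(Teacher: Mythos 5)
Your proposal is correct and follows exactly the route the paper intends: the text introduces Corollary~\ref{c2} with the words ``by Theorem~\ref{t1}'', i.e.\ one applies the metrizability half of that theorem (which, as you rightly observe, needs no closedness hypothesis) to the non-metrizable, cosmic, locally path-connected space $V_\omega$. Your three verifications of the properties of $V_\omega$ are routine and accurate, so nothing is missing.
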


\begin{remark} By Corollaries~\ref{c1} and \ref{c2}, a topological
group $G$ containing a topological copy of $V_\omega$ {\bf or} a
closed topological copies of $H_\omega$ is not finite-dimensional.
By its form, this results resembles a result of \cite{BZ} or
\cite{Ba}: a topological group with countable pseudocharacter
containing a topological copy of $V_\omega$ {\bf and} a closed
topological copy of $H_\omega$ is not sequential. We do not know
if this resemblance is occasional.

It is also interesting to compare Corollaries~\ref{c1} and
\ref{c2} with a result of J.Kulesza \cite{Kul} who proved that the
metric hedgehog $H_{\omega_1}$ with $\omega_1$ spines cannot be
embedded into a finite-dimensional topological group. The other
his result says that the hengehog $H_3$ with three spines cannot
be embedded into a 1-dimensional topological group.
\end{remark}

Key Lemma has another interesting corollary related to the famous
fifth problem of Hilbert.

\begin{corollary} A topological group $G$ is a Lie group if and
only if $G$ is compactly finite-dimensional and locally
continuum-connected.
\end{corollary}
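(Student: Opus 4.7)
The forward direction is immediate: a Lie group is a topological manifold of some finite dimension $n$, so it is locally Euclidean, in particular locally path-connected (and hence locally continuum-connected), and $\text{co-dim}(G)=n<\infty$. The substance lies in the converse, sketched below.

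For the converse, given a compactly finite-dimensional, locally continuum-connected group $G$, I plan to apply the Key Lemma to the subspace $X=G$ at the identity $e$. This supplies a neighborhood $U\subset G$ of $e$ whose closure $\overline U$ in $G$ is compact. Translating by group elements exhibits compact neighborhoods at every point, so $G$ is locally compact. Since topological dimension of a locally compact Hausdorff space is controlled by its compact subsets, $\dim G\le \text{co-dim}(G)<\infty$, so $G$ is finite-dimensional. Local continuum-connectedness trivially entails local connectedness, hence the identity component $G_0$ is open in $G$; proving that $G_0$ is a Lie group will finish the proof, because $G$ is then Lie with $G_0$ as its identity component.

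To recognize $G_0$ as a Lie group I would try to reduce to the Gleason--Palais theorem quoted in the introduction (the locally path-connected case) by upgrading local continuum-connectedness to local path-connectedness in our locally compact, finite-dimensional setting. Working inside the compact metrizable neighborhood $\overline U$ (metrizability of compacta follows after replacing $G$ by the $\sigma$-compact open subgroup generated by $\overline U$), each subcontinuum produced by the definition should be refined into a locally connected subcontinuum, whereupon the Hahn--Mazurkiewicz theorem provides an arc joining the two prescribed points inside the prescribed neighborhood.

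The principal obstacle is precisely this arc-construction step, since the connecting subcontinua supplied by the definition need not be locally connected, and one must thicken them inside $G$ using the local connectedness of the ambient group without leaving the prescribed neighborhood. Should this prove stubborn, an alternative route is to appeal to the Montgomery--Zippin/Yamabe structure theory for locally compact groups, which directly yields that a locally compact, locally connected, finite-dimensional group is a Lie group, thereby bypassing the detour through local path-connectedness. All other ingredients --- the invocation of the Key Lemma, the passage to the open identity component, and the dimension bookkeeping --- are routine.
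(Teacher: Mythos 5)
Your proposal is essentially correct, and the route you list as the ``alternative'' --- citing the Montgomery--Zippin structure theory to conclude that a locally compact, locally connected, finite-dimensional group is a Lie group --- is exactly what the paper does: it applies the Key Lemma to $X=G$ to get local compactness, observes that local compactness together with compact finite-dimensionality gives $\dim G<\infty$, and then quotes \cite[p.~185]{MZ}. Your primary proposed route, upgrading local continuum-connectedness to local path-connectedness via Hahn--Mazurkiewicz so as to invoke Gleason--Palais, is precisely where the real difficulty sits (the connecting continua need not be locally connected, and nothing in the hypotheses lets you replace them by Peano continua inside a prescribed neighborhood); you correctly flag this as the obstacle, and the paper sidesteps it entirely rather than resolving it. The detour through the identity component $G_0$ is harmless but unnecessary once you have the Montgomery--Zippin citation. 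In short: drop the arc-construction attempt, promote your fallback to the main argument, and you have the paper's proof.
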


\begin{proof} The ``only if'' part is trivial. To prove the ``if''
part, take any locally continuum-connected compactly
finite-dimensional topological group $G$. Key Lemma implies that
$G$ is locally compact and hence finite-dimensional. By
\cite[p.185]{MZ}, the group $G$ is a Lie group, being locally
compact, locally connected, and finite-dimensional.
\end{proof}

This corollary generalizes Gleason-Palais Theorem \cite{GP}
stating that each locally path-connected finite-dimensional
topological gro\-up is a Lie group. The Gleason-Palais Theorem was
applied by S.~N. Hudson \cite{Hud} to show that each
path-connected locally connected finite-dimensional topological
group is a Lie group.

\begin{problem} Let $G$ be a locally connected finite-dimensional
topological group. Is $G$ a Lie group if $G$ is
continuum-connected? (The latter means that any two points of $G$
can be connected by a subcontinuum of $G$).
\end{problem}

It should be mentioned that there exists a connected locally
connected subgroup of $\mathbb{R}^2$ that contains no arc, see
\cite{Jon}.

\section{A Dimension Lemma}

The proof of Key Lemma relies on the following (probably known)
fact from Dimension Theory.

\begin{lemma}\label{l1} Let $K_1,\dots,K_n$ be continua and for
every $i\le n$ let $a_i,b_i\in K_i$ be two distinct points. Let
$K=\prod_{i=1}^nK_i$ and $A_i=\operatorname{pr}_i^{-1}(a_i)$,
$B_i=\operatorname{pr}^{-1}_i(b_i)$ where
$\operatorname{pr}_i:K\to K_i$ is the projection. Let $f:K\to X$
be a continuous map to a Hausdorff topological space such that
$f(A_i)\cap f(B_i)=\emptyset$ for all $i\le n$. Then $\dim f(K)\ge
n$.
\end{lemma}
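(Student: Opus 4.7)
The plan is to argue by contradiction: suppose $\dim f(K)\le n-1$ and derive a contradiction with the classical fact that the ``opposite face'' family of a product of non-degenerate continua is essential. Since $K$ is compact, $f(K)$ is compact Hausdorff and hence normal, and the sets $f(A_i),f(B_i)$ are disjoint closed subsets of $f(K)$ by hypothesis. The Eilenberg--Otto partition characterization of covering dimension in normal spaces, applied to the $n$ disjoint closed pairs $(f(A_i),f(B_i))$, supplies partitions $L_i\subset f(K)$ between $f(A_i)$ and $f(B_i)$ with $\bigcap_{i=1}^n L_i=\emptyset$. Writing $f(K)\setminus L_i=U_i\sqcup V_i$ with $f(A_i)\subset U_i$ and $f(B_i)\subset V_i$, the preimages $M_i:=f^{-1}(L_i)$ are closed in $K$, and the decomposition $K\setminus M_i=f^{-1}(U_i)\sqcup f^{-1}(V_i)$ with $A_i\subset f^{-1}(U_i)$ and $B_i\subset f^{-1}(V_i)$ exhibits $M_i$ as a partition between $A_i$ and $B_i$ in $K$. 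Moreover $\bigcap_i M_i=f^{-1}(\bigcap_i L_i)=\emptyset$.

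The proof therefore reduces to establishing the statement ($\star$): in $K=\prod_{i=1}^n K_i$, any partitions $M_i$ between $A_i$ and $B_i$ satisfy $\bigcap_i M_i\ne\emptyset$; equivalently, the opposite-face family $(A_i,B_i)_i$ is essential in $K$ and witnesses $\dim K\ge n$. This is a classical theorem of Eilenberg (see, e.g., Engelking's \emph{Theory of Dimensions Finite and Infinite}), and I expect it to be the main obstacle. The standard route uses Urysohn's lemma in the normal space $K$ to construct, for each $i$, a continuous $\phi_i\colon K\to[-1,1]$ with $\phi_i(A_i)=-1$, $\phi_i(B_i)=+1$, and $\phi_i$ vanishing on $M_i$ (take $\phi_i=\alpha_i-\beta_i$ for Urysohn functions $\alpha_i$ separating $A_i$ from $K\setminus U_i$ and $\beta_i$ separating $B_i$ from $K\setminus V_i$, relative to the decomposition $K\setminus M_i=U_i\sqcup V_i$). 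Assembling $\Phi=(\phi_1,\dots,\phi_n)\colon K\to[-1,1]^n$, one sees that $\Phi$ sends $A_i,B_i$ into opposite $i$-th faces, and $\bigcap_iM_i=\emptyset$ would force $0\notin\Phi(K)$. To rule this out, the standard device is to compare $\Phi$ with the canonical continuous surjection $\rho=(\rho_1\circ\operatorname{pr}_1,\dots,\rho_n\circ\operatorname{pr}_n)\colon K\to[-1,1]^n$ built from Urysohn functions $\rho_i\colon K_i\to[-1,1]$ with $\rho_i(a_i)=-1$ and $\rho_i(b_i)=+1$ (automatically onto, by connectedness of $K_i$), and then to transfer the non-existence of a retraction $[-1,1]^n\to\partial[-1,1]^n$ (i.e., Brouwer's theorem) through $\rho$ via a \v{C}ech-cohomological or degree-theoretic argument. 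This transfer through the non-injective $\rho$ is the technical core of ($\star$) and is where the bulk of the work lies.
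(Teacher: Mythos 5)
Your proposal is correct and follows essentially the same route as the paper: pull the partitions supplied by the partition characterization of dimension in the compact (hence normal) space $f(K)$ back through $f$ to obtain partitions between the opposite faces $A_i,B_i$ of $K$ with empty intersection, contradicting the essentiality of the opposite-face family in a product of nondegenerate continua. The paper likewise treats that essentiality statement purely as a citation (to Holszty\'nski and Lifanov, cf.\ Engelking 1.8.K) rather than proving it, so the step you flag as ``the technical core where the bulk of the work lies'' is exactly the part both treatments outsource, and your sketch of it need not be completed for the argument to match the paper's.
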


\begin{proof} Assume conversely that $\dim f(K)<n$ and apply
Theorem on Partitions \cite[3.2.6]{En} to find closed subsets
$P_1,\dots, P_n$ of $f(K)$ such that $\bigcap_{i=1}^n
P_i=\emptyset$ and each $P_i$ is a partition between $f(A_i)$ and
$f(B_i)$ in $f(K)$ (the latter means that $A_i$ and $B_i$ lie in
different connected components of $f(K)\setminus P_i$).

Then for every $i\le n$ the set $L_i=f^{-1}(P_i)$ is a partition
between $A_i$ and $B_i$ in $K$ and $\bigcap_{i=1}^n
L_i=\emptyset$. This means that the sequence $(A_1,B_1)$, \dots,
$(A_n,B_n)$ is inessential in $K=\prod_{i=1}^nK_i$, which
contradicts the results of Holszty\'nski \cite{Hol} or Lifanov
\cite{Lif}, see also \cite[1.8.K]{En}.
\end{proof}

\section{Proof of Key Lemma} Let $G$ be a topological group
with $n=\mbox{co-dim}(G)<\infty$ and $X$ be a subspace of $G$
that is locally continuum-connected at a point $e\in X$. We have
to show that $e$ has a neighborhood $U\subset X$ with compact
closure in $G$.

 Assuming the converse we shall derive a contradiction.  Without
loss of generality, the point $e$ is the neutral element of the
group $G$. By finite induction, we shall construct a sequence
$(C_i)_{i=0}^n$ of subcontinua of $X\subset G$ connecting $e$ with
some other point $a_i\in X\setminus\{e\}$ such that for the
``cube'' $K_i=\prod_{j\le i}C_j$ and the projections
$\operatorname{pr}_{k,i}:\prod_{j\le i}C_j\to C_k$, $k\le i$, the
``faces'' $A_{k,i}=\operatorname{pr}_{k,i}^{-1}(e)$,
$B_{k,i}=\operatorname{pr}_{k,i}^{-1}(a_k)$ have disjoint images
under the map
 $$f_i:\prod_{j\le i}C_j\to G,\;\; f_i:(x_0,\dots,x_i)\mapsto
x_0\cdot\dots\cdot x_i.$$ By Dimension Lemma~\ref{l1} this will
imply $$n=\mbox{co-dim}\, G\ge\dim f_i(K_i)\ge i+1,$$ which is not possible for
$i=n$.

Assume that for some $i\le n$ the pointed continua $(C_j,a_j)$,
$j<i$, have been constructed, so that for every $k<i$ the
``faces'' $A_{k,i-1}$, $B_{k,i-1}$ have disjoint images
$f_{i-1}(A_{k,i-1})$ and $f_{i-1}(B_{k,i-1})$ (we start the
induction from the trivial case $i=0$). The compactness argument
yields us a neighborhood $U_{i}\subset G$ of $e$ such that the
sets $f_{i- 1}(A_{k,{i-1}})\cdot U_i$ and $f_{i-1}(B_{k,i-1})\cdot
U_i$ do not intersect for all $k<i$. By the local
continuum-connectedness of $X$ at $e$, there is a neighborhood
$V_i\subset U_i\cap X$ of $e$ in $X$ such that any point $x\in
V_i$ can be connected with $e$ by a subcontinuum $C\subset U_i$.
Consider the compact subset $K=\{x^{-1}y:x,y\in
f_{i-1}(K_{i-1})\}$ of $G$. By our assumption, no neighborhood of
$e$ in $X$ has compact closure in $G$. Consequently,
$V_i\not\subset K$ and we can find a point $a_{i}\in V_i\setminus
K$. By the choice of $V_i$, there is a compact connected subset
$C_i\in U_i$ containing the points $e$ and $a_i$.

To complete the inductive step it now suffices to check that
$$f_i(A_{k,i})\cap f_i(B_{k,i})=\emptyset$$ for all $k\le i$.

If $k<i$, then $A_{k,i}=A_{k,i-1}\times C_i$ and
$f_i(A_{k,i})\subset f_{i-1}(A_{k,i-1})\cdot C_i\subset
f_{i-1}(A_{k,i-1})\cdot U_i$. By analogy, $f_i(B_{k,i})\subset
f_{i-1}(B_{k,i-1})\cdot U_i$. Consequently, the sets
$f_i(A_{k,i})$ and $f_i(B_{k,i})$ do not intersect by the choice
of the neighborhood $U_i$.

If $k=i$, then $A_{k,i}=K_{i-1}\times\{e\}$ and
$B_{k,i}=K_{i-1}\times\{a_i\}$. Consequently,
$f_i(A_{k,i})=f_{i-1}(K_{i-1})$ and
$f_i(B_{k,i})=f_{i-1}(K_{i-1})\cdot a_i$. Now the choice of the
point $a_i\notin K$, implies that $f_i(A_{k,i})\cap
f_i(B_{k,i})=\emptyset$. This finishes the inductive construction
as well as the proof of Key Lemma.

\section{Acknowledgement}

The authors would like to thank Dikran Dikranjan for valuable
comments on the prelimanary version of the paper.

\bibliographystyle{plain}

\end{document}